\documentclass[twoside,a4paper,reqno,11pt]{amsart} 
\usepackage{amsfonts, amsbsy, amsmath, amssymb, latexsym,hyperref, mathtools, bold-extra, url}
\usepackage{mathrsfs,array}
\usepackage[top=24mm,right=28mm,bottom=24mm,left=28mm]{geometry}
\usepackage{stmaryrd}
\usepackage{bm}
\usepackage[pdftex]{color,graphicx}

\renewcommand{\a}{\alpha}
\renewcommand{\b}{\beta}
\newcommand{\normeq}{\trianglelefteqslant}

 \renewcommand{\O}{\Omega}

 \renewcommand{\to}{\rightarrow}

\newcommand{\la}{\langle}
\newcommand{\ra}{\rangle}

\newcommand{\leqs}{\leqslant}
\newcommand{\geqs}{\geqslant}

 \newcommand{\vs}{\vspace{3mm}}

\DeclareMathOperator{\GL}{GL}

\DeclareMathOperator{\Fix}{Fix}

\newcommand{\F}{\mathbb F}

\makeatletter
\newcommand{\imod}[1]{\allowbreak\mkern4mu({\operator@font mod}\,\,#1)}
\makeatother

\newtheorem*{theorem*}{Theorem}
\newtheorem{athm}{Theorem}
 
\newtheorem*{conj*}{Conjecture}

\newtheorem{thm}{Theorem}[section] 
\newtheorem{prop}[thm]{Proposition} 
\newtheorem{lem}[thm]{Lemma}

\theoremstyle{definition}
\newtheorem{rem}[thm]{Remark}

\begin{document}

\author{Timothy C. Burness}
\address{T.C. Burness, School of Mathematics, University of Bristol, Bristol BS8 1UG, UK}
\email{t.burness@bristol.ac.uk}

\author{Hangyang Meng}
\address{H. Meng, Department of Mathematics and  Newtouch Center for Mathematics of Shanghai University, College of Science, Shanghai University, Shanghai 200444, P.R. China}
\email{hymeng2009@shu.edu.cn}

\title[On second maximal subgroups of finite groups]{On second maximal subgroups of finite groups}

\begin{abstract}
A second maximal subgroup of a finite group $G$ is a subgroup $H$ that is maximal in every maximal overgroup of $H$ in $G$. We prove that every maximal subgroup of a prime-index subgroup of a finite group is a second maximal subgroup.
\end{abstract}

\date{\today}

\maketitle

\section{Introduction}\label{s:intro}

Let $G$ be a finite group and let $H$ be a proper subgroup of $G$. We say that $H$ is a \emph{weak second maximal} subgroup if it is a maximal subgroup of a maximal subgroup of $G$. Properties of weak second maximal subgroups and their influence on the structure of the ambient group have been studied since the 1950s. For example, a theorem of Huppert \cite{Huppert1954} states that $G$ is supersolvable if every weak second maximal subgroup is normal. And work of Janko \cite{Janko1962} shows that $A_5$ and ${\rm SL}_2(5)$ are the only nonsolvable groups with the property that every weak second maximal subgroup is nilpotent. 

The term weak second maximal subgroup was first coined by Flavell in  \cite{Flavell}. In addition, he refers to a proper subgroup $H$ of $G$ as a \emph{second maximal} subgroup if it is maximal in \emph{every} maximal overgroup of $H$ in $G$. For example, if we take the subgroup $H = \la (1,2) \ra$ of $G = S_4$, then $H$ is a weak second maximal subgroup because it is maximal in $\la (1,2),(1,3)\ra$, but it is not a second maximal subgroup since it is not maximal in $\la (1,2),(1,3)(2,4)\ra$. We note that some authors use the terms \emph{$2$-maximal} and \emph{strictly $2$-maximal} in place of weak second maximal and second maximal, respectively (see \cite{KMS}, for example).

Given a proper subgroup $H$ of $G$, it is natural to consider the number of maximal subgroups of $G$ containing $H$, which we denote by $m(G,H)$, and there is an extensive literature in this direction. In \cite{Flavell}, for example, Flavell reveals a relationship between $m(G,H)$ and the indices of maximal overgroups of a second maximal subgroup $H$ of $G$. Indeed, \cite[Theorem A]{Flavell} establishes the upper bound
\[
m(G,H) \leqs 1 + \max\{ |G:M| \,:\, \mbox{$M$ is a maximal subgroup of $G$ containing $H$}\}
\]
for every second maximal subgroup $H$ of a finite group $G$, together with a detailed description of the pairs $(G,H)$ for which equality holds ($H/H_G$ is solvable in each case, where $H_G$ denotes the core of $H$ in $G$). In the same paper, Flavell asked whether or not the same upper bound holds when $H$ is a \emph{weak} second maximal subgroup. For solvable groups, this was answered in the affirmative by Meng and Guo \cite{MG-2}.

In this paper, we seek to shed new light on the following natural problem: when does a weak second maximal subgroup $H$ of $G$ have the stronger second maximal subgroup property? In particular, we would like to understand how this is related to the index of a maximal overgroup of $H$.
A recent result in this direction due to Konovalova, Monakhov and Sokhor \cite{KMS} states that if $H$ is a weak second maximal subgroup of $G$ and $M$ is a maximal overgroup of $H$, then $H$ is a second maximal subgroup if $M$ is normal in $G$, or if $G$ is $p$-solvable and $|G:M|=p$ for some prime $p$. Our main theorem shows that the $p$-solvable hypothesis is not needed.

\begin{athm}\label{thm-max}
Let $G$ be a finite group and let $H$ be a maximal subgroup of a prime-index subgroup of $G$. Then $H$ is a second maximal subgroup of $G$.
\end{athm}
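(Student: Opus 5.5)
Throughout, let $M$ be the prime-index subgroup of $G$ in the statement, with $|G:M| = p$ prime. Then $M$ is maximal in $G$, since any overgroup of $M$ has index dividing $p$. Let $K \ne M$ be another maximal subgroup of $G$ containing $H$. We must show that $H$ is maximal in $K$.

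\textbf{Step 1: identify $H$ inside $K$.} Since $M \not\leqs K$, we have $H \leqs M\cap K < M$. Maximality of $H$ in $M$ then forces $H = M\cap K$. Let $G$ act on $\Omega = G/M$, a transitive action of degree $p$. Then $H = K_\alpha$, where $\alpha$ is the point fixed by $M$. If $K$ is transitive on $\Omega$, then $|K:H| = p$ is prime, so $H$ is maximal in $K$. Thus the whole difficulty lies in the case where $K$ is an \emph{intransitive} maximal subgroup.

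\textbf{Step 2: reduce to the faithful action.} Let $N = M_G$ be the core of $M$, which is the kernel of the action on $\Omega$. If $N \not\leqs K$, then $KN = G$ by maximality of $K$. Since $N$ fixes every point, $K$ is then transitive on $\Omega$, and Step 1 applies. So we may assume $N \leqs K$, and hence $N \leqs H$. Passing to $G/N$ preserves all the relevant containments. We may therefore assume that $G$ is a transitive permutation group of prime degree $p$ and that $K$ is intransitive and maximal. By Burnside's theorem, either $G \leqs {\rm AGL}_1(p)$ or $G$ is almost simple and $2$-transitive.

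\textbf{Step 3: the affine case.} Here $G = P{:}C$ with $P$ the normal Sylow $p$-subgroup and $C$ cyclic. An intransitive $K$ satisfies $K \cap P = 1$, so $K$ is a complement to $P$. Since $K$ has order $|C| = |G_\beta|$ and the complements to $P$ are exactly the point stabilizers, $K = G_\beta$ for some $\beta \neq \alpha$. The group is Frobenius (or regular), so $G_\alpha\cap G_\beta = 1$. Thus $H = 1$, and $H$ is maximal in the cyclic group $M$, which forces $|M|$ to be prime. Since $K \cong M$, the trivial subgroup $H$ is also maximal in $K$.

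\textbf{Step 4: the almost simple case (main obstacle).} Using CFSG, the socle $T$ of $G$ is one of the following:
\begin{itemize}
\item $A_p$;
\item ${\rm PSL}_n(q)$ with $p = (q^n-1)/(q-1)$, in its action on points or on hyperplanes;
\item ${\rm PSL}_2(11)$ with $p = 11$;
\item $M_{11}$ or $M_{23}$.
\end{itemize}
For each case I would list the intransitive maximal subgroups $K$ of $G$ and check that every point stabilizer $K_\alpha$ which is maximal in $G_\alpha$ is also maximal in $K$. Note that $H = K_\alpha$ must satisfy both conditions, so configurations where $K_\alpha$ fails to be maximal in $G_\alpha$ are excluded automatically.

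For $S_p$ and $A_p$, $K$ is a set stabilizer $(S_k\times S_{p-k})\cap G$. In this case $K_\alpha$ is $(S_{k-1}\times S_{p-k})\cap G$, which is maximal in $K$, with small cases checked by hand.

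For ${\rm PSL}_n(q)$, the intransitive maximal subgroups are point stabilizers $G_\beta$, hyperplane stabilizers, and possibly novelty subgroups when $G$ contains a graph automorphism. Take $K = G_\beta$ with $n\geqs 3$. Then $G_{\alpha\beta}$ lies strictly inside the stabilizer of the pair $\{\beta, \text{line } \alpha\beta\}$, so it is not maximal in $G_\alpha$ either; hence this configuration does not occur. For a hyperplane stabilizer $K$ with $\alpha$ off the hyperplane, $K_\alpha$ is a Levi-type subgroup. I would verify its maximality in $K$ and in $G_\alpha$ directly, and dispose of the case $\alpha$ in the hyperplane similarly to the point-stabilizer case. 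The sporadic and ${\rm PSL}_2(11)$ cases I would check from the known lists of maximal subgroups.

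This case-by-case verification, especially handling the linear groups together with their field and graph automorphisms uniformly, is where I expect the main work to lie.
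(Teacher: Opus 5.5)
Your Steps 1--3 are correct and follow the paper's overall plan. Step 2 is a neat self-contained replacement for Lemma~\ref{lem:core}, and your affine and alternating arguments match Propositions~\ref{p:a} and~\ref{p:b}.

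\textbf{The gap.} It is in Step 4 for socle ${\rm PSL}_n(q)$. Your plan rests on the claim that the intransitive maximal subgroups of $G$ are the point stabilizers, the hyperplane stabilizers and graph novelties. That claim is false, in two ways.

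First, for $n \geqs 5$ the stabilizers $G_Y$ of $k$-spaces with $2 \leqs k \leqs n-2$ are intransitive maximal subgroups (e.g.\ in ${\rm PSL}_5(2)$ of degree $31$). They genuinely satisfy the hypothesis: if $\a \subset Y$ then $U \leqs G_{\a,Y}$ and $G_{\a,Y}/U$ is a maximal parabolic of the Levi factor, so $G_{\a,Y}$ is maximal in $G_\a$.

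Second, most irreducible maximal subgroups have order prime to $p$ and so are intransitive on points: subfield, imprimitive and form-preserving subgroups, and so on. For instance, ${\rm PSL}_3(3)$ acting on $13$ points has a maximal subgroup $S_4 \cong {\rm SO}_3(3)$ with orbits of lengths $4$, $6$, $3$. For $n=2$, the torus normalizer $N_G(L)$ and the subfield subgroups are intransitive, and $N_G(L)$ again satisfies the hypothesis (take $\a$ to be a fixed point of $L$). So ``list and check'' would mean running through every Aschbacher class for every prime $n$ and every $q$, and you have not indicated how to do this.

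There are also three smaller points:
\begin{itemize}
\item Graph automorphisms cannot occur: $n$ is prime with $\gcd(n,q-1)=1$, so $G = {\rm SL}_n(q){:}\la\varphi\ra$.
\item Your point-stabilizer argument assumes $n \geqs 3$, so the case $n=2$ is untreated.
\item If $\a$ lies in the hyperplane $Z$ stabilized by $K$, then $G_{\a,Z}$ \emph{is} maximal in $G_\a$ for $n \geqs 3$. So this case cannot be dismissed like the point-stabilizer case; the conclusion holds only because $G_Z$ is $2$-transitive on $\mathbb{P}(Z)$.
\end{itemize}

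\textbf{The missing idea.} Use the maximality of $K_\a$ in $G_\a$ to \emph{pin down} $K$, rather than as a filter on a list. Let $T$ be the socle and write $G_\a = U{:}(L{:}\la\varphi\ra)$, where $U$ is the unipotent radical and $L \cong \GL_{n-1}(q)$ is the Levi factor fixing a hyperplane $W$ with $V = \a \oplus W$. Since $U$ is the unique minimal normal subgroup of $G_\a$, a maximal subgroup $K_\a$ either contains $U$ or is a complement to $U$.

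\emph{Case $U \leqs K_\a$.} Then $K \cap T$ contains the unipotent radical of a parabolic subgroup of $T$. By the theorem of Timmesfeld used in the paper, it lies in a parabolic subgroup $T_X$. Since $K \cap T \normeq K$, you get $K \cap T \leqs T_Y$ with $Y = \bigcap_{g \in K} X^g$. The $U$-invariance of each $X^g$ forces $\a \subseteq Y$, and maximality gives $K = G_Y$, which is $2$-transitive on $\a^K = \mathbb{P}(Y)$.

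\emph{Case $K_\a$ complements $U$.} Lemma~\ref{l:psl}(i) ($H^1(L,U)=0$) lets you assume, after conjugating by $U$, that $K_\a \cap T = L$. By Lemma~\ref{l:psl}(ii), the only overgroups of $L$ in $T$ are $L$, $T_\a$, $T_W$ and $T$ when $n \geqs 3$; for $n=2$, Dickson's list adds $N_T(L)$. This forces either $K = G_W = Q{:}K_\a$ with $K_\a$ irreducible on $Q$, or $K = N_G(L)$ with $|K:K_\a| = 2$.

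This is how Proposition~\ref{p:d} avoids enumerating the maximal subgroups of $G$ altogether.
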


In order to discuss possible extensions of Theorem \ref{thm-max}, it will be convenient to introduce the following notation. For each positive integer $n \geqs 2$, let $(\mathcal{P}_n)$ be the following assertion:
\[
\begin{array}{l}
\mbox{\emph{For any finite group $G$ with a maximal subgroup $M$ of index $n$, every }} \\
\mbox{\emph{maximal subgroup of $M$ is a second maximal subgroup of $G$.}}
\end{array}
\]
So Theorem \ref{thm-max} states that $(\mathcal{P}_n)$ holds for every prime $n$. And the previous example with $G = S_4$, $M = \la (1,2),(1,3) \ra$ and $H = \la (1,2) \ra$ shows that $(\mathcal{P}_4)$ is false.

In fact, it is straightforward to show that $(\mathcal{P}_n)$ is false for every composite even integer and every composite prime power:

\begin{itemize}\addtolength{\itemsep}{0.2\baselineskip}
\item[{\rm (a)}] Suppose $n = 2m \geqs 4$ is even and let $G = S_n$, $M = S_{n-1}$ and $K = S_m \wr S_2$, so $M$ and $K$ are maximal in $G$ and $|G:M| = n$. Then we can take a subgroup $H = S_m \times S_{m-1}< S_m \times S_m < K$ that is maximal in $M$, so $H$ is a weak second maximal subgroup, but it is not a second maximal subgroup.

\item[{\rm (b)}] Let $n = p^d$ be a prime power with $d \geqs 2$. Let $V = \mathbb{F}_p^d$ and set $G = {\rm AGL}(V) = V{:}{\rm GL}(V)$, $M = {\rm GL}(V)$ and $K = V{:}H$, where $H$ is the stabilizer in $M$ of a $1$-dimensional subspace of $V$. Then the irreducibility of $M$ on $V$ implies that $M$ is maximal in $G$ with $|G:M| = |V| = n$, and $K$ is maximal in $G$ since $H$ is maximal in $M$. However, $H$ acts reducibly on $V$ since $d \geqs 2$ and thus $H$ is not maximal in $K$. 
\end{itemize}

The problem for odd composite integers $n \geqs 15$ that are not prime powers is more subtle. In the statement of the following result, we set 
\[
\mathcal{A} = \{ n \in \mathbb{N} \,:\, \mbox{$S_n$ and $A_n$ are the only primitive subgroups of $S_n$} \}.
\]
With the aid of {\sc Magma} \cite{magma}, it is easy to check that  
\[
\{n \in \mathcal{A} \,:\, n \leqs 100\} = \{ 34, 39, 46, 51, 58, 69, 70, 75, 76, 86, 87, 88, 92, 93, 94, 95, 96, 99 \}.
\]

\begin{athm}\label{thm-max2}
Let $G$ be a finite group and let $H$ be a maximal subgroup of a maximal subgroup $M$ of $G$ such that $|G:M| = n \in \mathcal{A}$ is odd. Then $H$ is a second maximal subgroup of $G$.
\end{athm}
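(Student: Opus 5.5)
The plan is to split according to whether $H$ contains the core $N = M_G$ of $M$ in $G$. Fix a maximal subgroup $K$ of $G$ containing $H$; we must show that $H$ is maximal in $K$. If $K = M$ there is nothing to prove, so assume $K \neq M$. Since $M$ is maximal of index $n$, the group $G/N$ acts faithfully and primitively on the $n$ cosets of $M$. As $n \in \mathcal{A}$, this gives $G/N \in \{A_n, S_n\}$, with $M/N$ a point stabilizer $A_{n-1}$ or $S_{n-1}$.

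\emph{Case 1: $N \not\leqs H$.} Maximality of $H$ in $M$ then gives $HN = M$.
\begin{itemize}
\item[(i)] $K$ does not contain $N$. Otherwise $K \geqs HN = M$, and so $K = M$.
\item[(ii)] $H \cap N$ is a maximal $H$-invariant proper subgroup of $N$. Indeed, if $H \cap N < X < N$ with $X$ normalized by $H$, then $HX$ is a subgroup with $HX \cap N = X(H\cap N) = X$. So $H < HX < M$, which is a contradiction.
\item[(iii)] $K \cap N = H \cap N$. This follows because $K \cap N$ is $H$-invariant, contains $H \cap N$, and is proper in $N$ by (i).
\item[(iv)] Since $K$ is maximal and does not contain $N$, we have $KN = G$. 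Hence $K/(K\cap N) \cong G/N$, and under this isomorphism $H/(H \cap N) = H/(K\cap N)$ maps onto $HN/N = M/N$, which is maximal in $G/N$.
\end{itemize}
By (iii) and (iv), every subgroup $L$ with $H \leqs L \leqs K$ contains $K \cap N$ and corresponds to a subgroup between $M/N$ and $G/N$. Therefore $L \in \{H, K\}$, and $H$ is maximal in $K$.

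\emph{Case 2: $N \leqs H$.} Here we pass to $G/N$ and may assume $G \in \{S_n, A_n\}$, $M$ is the stabilizer of a point $\omega$, and $H$ is maximal in $M$. Let $K \neq M$ be maximal in $G$ with $H \leqs K$. First suppose $H$ is transitive on $\Omega \setminus \{\omega\}$. Then any $K \neq M$ containing $H$ is transitive, hence $2$-transitive, hence primitive. Since $n \in \mathcal{A}$, this forces $K \geqs A_n$, which is impossible for a proper maximal subgroup unless $G = S_n$ and $K = A_n$. In that last situation $H$ is a maximal subgroup of $S_{n-1}$ lying in $A_n$, so $H = A_{n-1} = K \cap M$, and $H$ is maximal in $K = A_n$.

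So suppose $H$ is intransitive on $\Omega\setminus\{\omega\}$. Then $H = (S_k \times S_{n-1-k}) \cap G$, with orbits $\{\omega\}$, $\Delta_1$, $\Delta_2$ of sizes $1$, $k$ and $n-1-k$. Consider the possible maximal overgroups $K$.
\begin{itemize}
\item[(a)] $K$ is intransitive. Then $K$ is the setwise stabilizer of $\{\omega\}\cup\Delta_i$, say for $i = 1$, giving $K = (S_{k+1}\times S_{n-1-k})\cap G$. Here $H$ is maximal in $K$, because $S_k$ is maximal in $S_{k+1}$; in the alternating case a short parity check is needed.
\item[(b)] $K$ is primitive. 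This is excluded by $n \in \mathcal{A}$, except for $K = A_n$, which is handled as above.
\item[(c)] $K$ is transitive but imprimitive. Then $K$ preserves a block system with blocks of size $a$, where $1 < a < n$ and $a \mid n$; in particular $a$ is odd. The block $B$ containing $\omega$ is $H$-invariant, so $B$ is a union of $H$-orbits, and hence $|B| \in \{1+k,\, n-k\}$. The complementary orbit is then a union of blocks permuted by $S_{n-1-k}$ (or $S_k$), or its alternating subgroup. These groups are primitive on that orbit, which forces a single block there. We then get $n = 2a$, contradicting the oddness of $n$. This is exactly where the even counterexample (a) breaks down, and small orbit sizes where $A_m$ fails to be primitive need a separate check.
\end{itemize}

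I expect the main obstacle to be Case 2(c): the careful block-system argument, together with the residual alternating-group parity checks and small cases.
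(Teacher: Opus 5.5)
Your proof is correct, and it reaches the result by a somewhat different route from the paper's. The paper argues by contradiction. It passes to $G/H_G$ and cites Meng--Guo (Lemma~\ref{lem:core}) to get $M_G = H_G = 1$, so $G \in \{A_n,S_n\}$ acts primitively with point stabilizer $M$. It then proves Proposition~\ref{p:ansn} by classifying only the overgroup $K$, using $H = M \cap K = K_\alpha$:
\begin{itemize}
\item primitive $K$ is immediate, since a point stabilizer of a primitive group is maximal;
\item intransitive $K$ is handled by the projection $\pi$;
\item imprimitive $K = (S_a \wr S_b)\cap G$ is excluded by a double transposition giving $K_\alpha < J < M$, where $J$ is the stabilizer of $\Gamma\setminus\{\alpha\}$ in $M$.
\end{itemize}

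\textbf{Your Case 1.} This reproves the needed part of Lemma~\ref{lem:core} inline. Note that $M_G \leqs H$ holds if and only if $H_G = M_G$. You show directly that $K\cap N = H\cap N$ and $K/(K\cap N)\cong G/N$, with $H/(H\cap N)$ corresponding to $M/N$. This makes the argument self-contained, and it shows that this case holds for arbitrary $n$.

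\textbf{Your Case 2.} You first split on whether $H$ is transitive on $\Omega\setminus\{\omega\}$, and you invoke $n\in\mathcal{A}$ a second time to identify primitive $K$. That second use is unnecessary. In both your transitive subcase and in (b), $H \leqs K\cap M < M$ forces $H = K_\omega$, which is maximal in the primitive group $K$. So, as in the paper, $\mathcal{A}$ is only needed to identify $G/N$. With that change the symmetric/alternating analysis works for every odd $n\geqs 5$.

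\textbf{Your part (c).} Using the explicit orbits of $H$ together with primitivity of $H$ on the complementary orbit to force $b=2$ is a clean alternative to the paper's double-transposition trick. The small-orbit worry you flag does not arise. The complementary orbit is a union of at least one full block, so its size is at least $a\geqs 3$, and $A_m$ (hence also $S_m$) is primitive on it. The parity check in (a) is exactly the paper's projection argument: $\pi$ is surjective when the other orbit has size at least $2$, and otherwise $K\cong A_{n-1}$ and $H = A_{n-2}$.

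\textbf{Trade-off.} The paper's version is shorter, because it never needs to classify $H$. Yours avoids citing \cite{MG}.
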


So this shows that $(\mathcal{P}_n)$ is true for every odd integer $n \in \mathcal{A}$. Now if we define  
\[
\delta(\mathcal{A}) = \lim_{n \to \infty} \frac{|\mathcal{A} \cap \{1, \ldots,n\}|}{n}
\]
to be the natural density of $\mathcal{A}$ as a subset of $\mathbb{N}$, then a well known theorem of Cameron, Neumann and Teague \cite{CNT} states that $\delta(\mathcal{A}) = 1$. This means that $S_n$ and $A_n$ are the only primitive subgroups of $S_n$ for almost every positive integer $n$. And the same conclusion holds if we only consider the set of odd positive integers. So Theorem \ref{thm-max2} implies that $(\mathcal{P}_n)$ is true for almost every odd positive integer. We refer the reader to Remark \ref{r:odd} for further discussion in this direction.

Our proof of Theorem \ref{thm-max} uses a result of Meng and Guo \cite[Lemma 1]{MG} to reduce the problem to a question concerning transitive permutation groups of prime degree. The result we need in this setting is stated as Theorem \ref{thm:main} in Section \ref{s:A} and we will use it to establish Theorem \ref{thm-max} by eliminating the existence of a minimal counterexample. So the main bulk of the argument concerns the proof of Theorem \ref{thm:main}. By a classical result of Burnside, every transitive permutation of prime degree is either $2$-transitive or solvable (and hence affine), and it is easy to read off the complete list of cases by appealing to the classification of the finite $2$-transitive groups. Note that by invoking the latter result, our argument depends on the Classification of Finite Simple Groups. The case where $G$ is an almost simple group with socle ${\rm PSL}_d(q)$ and degree $(q^d-1)/(q-1)$ requires the most effort.

We will also use \cite[Lemma 1]{MG} in our proof of Theorem \ref{thm-max2} to show that if $(\mathcal{P}_n)$ fails for a given positive integer $n$, then this must be witnessed by a primitive subgroup $G \leqs S_n$. And then it just remains to show that there are no such examples when $n$ is odd and $G = S_n$ or $A_n$, which is how we proceed.

Finally, we anticipate that Theorems \ref{thm-max} and \ref{thm-max2} will be useful for studying the so-called \emph{WSM-groups}. These are the finite groups for which every weak second maximal subgroup is second maximal. The solvable WSM-groups have been studied by Meng and Guo in \cite{MG}, but a complete classification remains out of reach. This is also related to Problem 19.54 in the Kourovka Notebook \cite{Khukhro2026}, which asks for a description of the chief factors of a WSM-group.

\vs

\noindent \textbf{Acknowledgements.} Burness thanks the Department of Mathematics at Shanghai University for their generous hospitality during a research visit in June 2026. This work is supported by National Natural Science Foundation of China (12471018) and Natural Science Foundation of Shanghai (24ZR1422800). 

\section{Proof of Theorem \ref{thm-max}}\label{s:A}

In this section, we will prove Theorem \ref{thm-max}. As we will see, the key ingredient is the following result concerning transitive permutation groups of prime degree.

\begin{thm}\label{thm:main}
Let $G \leqs {\rm Sym}(\Omega)$ be a transitive permutation group of prime degree with point stabilizer $G_{\a}$ and let $M$ be a maximal subgroup of $G$. If $M_{\a}$ is maximal in $G_{\a}$, then it is also  maximal in $M$.  
\end{thm}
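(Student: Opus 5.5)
We argue by cases according to Burnside's theorem: a transitive group $G$ of prime degree $p$ is either solvable, and hence $G \leqs {\rm AGL}_1(p)$, or it is $2$-transitive with nonabelian simple socle. Throughout, $M$ is maximal in $G$ and $M_{\a}$ is maximal in $G_{\a}$. The first step is a reduction.

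\emph{The case $M$ transitive.} Here $G = MG_{\a}$, so $|M:M_{\a}| = |G:G_{\a}| = p$. Suppose $M_{\a} < L < M$. Then $|M:L|$ and $|L:M_{\a}|$ multiply to the prime $p$, so one of them is $1$, which is impossible. Hence $M_{\a}$ is maximal in $M$ automatically. We may therefore assume that $M$ is intransitive.

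\emph{The case $M$ intransitive.} Since $p$ is prime, $G_{\a}$ is maximal in $G$ only when $G$ is primitive, which holds automatically.

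\emph{Solvable case.} Here $G = C_p{:}C_m$ with $m \mid p-1$ and $G_{\a} = C_m$. Because $M$ is intransitive, it does not contain the regular normal subgroup $C_p$. Maximality then forces $M$ to be a complement, so $M = G_{\b}$ for some point $\b$ and $M \cong C_m$. If $\b = \a$, then $M_{\a} = M$, which is not maximal in $G_{\a} = M$, so this is excluded. If $\b \ne \a$, then $M_{\a} = G_{\a\b} = 1$, since a nonidentity element of ${\rm AGL}_1(p)$ fixes at most one point. The subgroup $1$ is maximal in $G_{\a} \cong C_m$ only when $m$ is prime, and then $1$ is also maximal in $M \cong C_m$. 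This settles the solvable case.

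\emph{Almost simple case.} We run through the list of $2$-transitive groups of prime degree:
\begin{itemize}
\item $A_p \leqs G \leqs S_p$;
\item ${\rm PSL}_2(11)$ of degree $11$;
\item $M_{11}$ and $M_{23}$ of degrees $11$ and $23$;
\item ${\rm PSL}_d(q) \leqs G \leqs {\rm P\Gamma L}_d(q)$ of degree $p = (q^d-1)/(q-1)$, with $d$ prime.
\end{itemize}
For the sporadic degrees we check directly, for instance in {\sc Magma}, every intransitive maximal $M$ with $M_{\a}$ maximal in $G_{\a}$.

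For $S_p$ and $A_p$, an intransitive maximal subgroup is $(S_k \times S_{p-k}) \cap G$. If $\a$ lies in the $k$-orbit, then $M_{\a} = (S_{k-1} \times S_{p-k}) \cap G$. Maximality of this in $G_{\a} = S_{p-1}$ (or $A_{p-1}$) forces $k = 1$, so that $M = G_{\a}$ and $M_{\a} = M$, which contradicts properness. The only exception is $p-1 = 2(k-1)$... but more carefully, $S_{k-1} \times S_{p-k}$ is maximal in $S_{p-1}$ exactly when $k-1 \ne p-k$, and since $p$ is odd we have $k - 1 \ne p - k$. So this does not rule the case out. In that case $M_{\a}$ has index $k$ in $M$, and it is maximal in $M$ because $S_{k-1}$ is maximal in $S_k$. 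The $A_p$ version is the same, working with intersections. Thus the conclusion holds in this family.

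\emph{Main obstacle: the ${\rm PSL}_d(q)$ case.} Here $\Omega$ is the set of points (or hyperplanes) of projective space, and $G_{\a}$ is a $P_1$ parabolic subgroup. We must determine which intransitive maximal subgroups $M$ occur; by Aschbacher's theorem these are essentially the subspace stabilizers $P_k$, which are transitive on neither points nor hyperplanes. We also need to account for novelty subgroups when $G$ contains graph automorphisms, but these are absent here since $G$ acts on points.

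Write $M$ as the stabilizer of a $k$-space $W$. Then $M_{\a}$ is the stabilizer of the pair $(\a, W)$, which has two orbit types: $\a \in W$ or $\a \notin W$. In each case we compare with the maximal subgroups of $G_{\a} = P_1$, via its Levi quotient ${\rm GL}_{d-1}(q)$ acting on $V/\a$. The flag stabilizer is then maximal in $G_{\a}$ and in $M$ precisely when the corresponding image is a maximal parabolic of the Levi factor in both settings, which follows symmetrically. We verify this using the unipotent radical structure, for example that $P_{1,k}$ is maximal in both $P_1$ and $P_k$.

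For any intransitive $M$ that is not a subspace stabilizer (such as $C_\ell$-type subgroups with orbits on points, where $p$ does not divide $|M|$), we aim to show that $M_{\a}$ is never maximal in $G_{\a}$. We would do this with order and index arguments, comparing $|G_{\a}:M_{\a}|$ against the minimal indices of maximal subgroups of $P_1$. This exhaustive Aschbacher analysis is where the real work lies.
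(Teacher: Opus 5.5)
Your treatment of the solvable case, of $A_p$ and $S_p$, and of the sporadic degrees is essentially the paper's. One slip there: $k-1\neq p-k$ does not follow from $p$ being odd, since $p=2k-1$ is possible. This is harmless, because $M$ induces $S_k$ on the $k$-orbit (or, for $A_p$ with $k=2$, still swaps its two points via a double transposition), so $M_\a$ is maximal in $M$ for every $k\geqs 2$.

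\textbf{The gap.} The genuine gap is case (d), which is the heart of the theorem. You leave it as a programme, and the programme as stated would fail.

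\begin{itemize}
\item Reading off maximality in $G_\a=U{:}(L{:}\la\varphi\ra)$ from the image in the Levi quotient only detects maximal subgroups that contain $U$. It misses the complements to $U$.
\item Complements to $U$ are exactly what arise for $M=G_X$ with $X$ a hyperplane not containing $\a$, a case you never treat. There one needs $M=Q{:}M_\a$, with $Q$ the unipotent radical of $T_X$ and $M_\a$ irreducible on $Q$. If instead $\dim X\leqs d-2$ and $\a\not\subseteq X$, one needs $M_\a<G_\a\cap G_{\a\oplus X}<G_\a$ to exclude the case.
\item Your intended claim that $M_\a$ is never maximal in $G_\a$ when $M$ is intransitive but not a subspace stabilizer is false. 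Take $d=2$, $q=2^{2^m}\geqs 16$, and let $L=C_{q-1}$ be the split torus fixing $\a$ and another point $\b$. Then $M=N_G(L)$ is an intransitive $\mathcal{C}_2$-type maximal subgroup. Moreover $M_\a=G_{\a\b}$ is a complement to $U$, and it is maximal in $G_\a$ because $L$ is transitive on $U\setminus\{1\}$. The conclusion holds there only because $|M:M_\a|=2$.
\item An Aschbacher enumeration needs the maximal subgroups of $G=T{:}\la\varphi\ra$ itself, not just those of $T$. Your remark on novelties only addresses graph automorphisms.
\end{itemize}

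\textbf{The missing idea.} Analyse $M_\a$ inside $G_\a$ instead of enumerating $M$ inside $G$. Since $U$ is abelian and is the unique minimal normal subgroup of $G_\a$, a maximal subgroup $M_\a$ of $G_\a$ either contains $U$ or is a complement to $U$.

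\begin{itemize}
\item If $U\leqs M_\a$, then $J=M\cap T$ contains a unipotent radical, so it lies in a parabolic subgroup by Timm's theorem. Since $J\normeq M$, one gets $M=G_Y$ for a subspace $Y$ properly containing $\a$, and $M$ is $2$-transitive on $\a^M=\mathbb{P}(Y)$.
\item If $M_\a$ complements $U$, then $H^1(L,U)=0$ lets you assume $M_\a\cap T=L$. The overgroups of $L$ in $T$ are $L$, $T_\a$, $T_W$, $T$ for $d\geqs 3$, plus $N_T(L)$ when $d=2$. This forces $M=G_W=Q{:}M_\a$ or $M=N_G(L)$, and in either case $M_\a$ is maximal in $M$.
\end{itemize}

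This dichotomy is what replaces the exhaustive Aschbacher analysis you describe.
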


As in Theorem \ref{thm:main}, let $G \leqs {\rm Sym}(\O)$ be a transitive permutation group with point stabilizer $H = G_{\a}$ and prime degree $r$. Since every transitive group of prime degree is primitive, it follows that $H$ is a core-free maximal subgroup of $G$. By a classical theorem of Burnside, $G$ is either solvable or $2$-transitive, so by inspecting the list of $2$-transitive permutation groups (see \cite[Chapter 7]{Cam} for example), which relies on the Classification of Finite Simple Groups, we deduce that one of the following holds (also see \cite[Theorem 3]{Jones}):

\begin{itemize}\addtolength{\itemsep}{0.2\baselineskip}
\item[{\rm (a)}] $C_r \normeq G \leqs {\rm AGL}_1(r)$.
\item[{\rm (b)}] $G = A_r$ or $S_r$ with $r \geqs 5$.
\item[{\rm (c)}] $(G,r) = ({\rm PSL}_{2}(11),11)$, $({\rm M}_{11},11)$ or $({\rm M}_{23},23)$.
\item[{\rm (d)}] ${\rm PSL}_d(q) \normeq G \leqs {\rm P\Gamma L}_d(q)$ with $d \geqs 2$ and $r = (q^d-1)/(q-1)$.
\end{itemize}

Let $M$ be a maximal subgroup of $G$ and assume that $M_{\a}$ is maximal in $H = G_{\a}$. Our aim is to show that $M_{\a}$ is also maximal in $M$. With this goal in mind, let us record two basic observations:
\begin{itemize}\addtolength{\itemsep}{0.2\baselineskip}
\item[{\rm (i)}] Clearly, if $M$ is transitive on $\O$ then $|M:M_{\a}| = r$ and thus $M_{\a}$ is maximal in $M$. Therefore, we may assume
that $M$ has two or more orbits on $\O$.
\item[{\rm (ii)}] Let $\a^M$ be the $M$-orbit of $\a$. Since we are assuming $M_{\a}$ is maximal in $G_{\a}$, it follows that $|\a^M| \geqs 2$ and our goal is to show that $M$ acts primitively on $\a^M$.
\end{itemize}

We will now consider cases (a)-(d) in turn.

\begin{prop}\label{p:a}
The conclusion to Theorem \ref{thm:main} holds in case (a).
\end{prop}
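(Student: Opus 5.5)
In case (a) we have $G = C_r{:}C_m$ with $m$ dividing $r-1$, where $N = C_r$ is the socle and $H = G_{\a} \cong C_m$ is a cyclic complement acting on $N$ by conjugation. We may identify $\O$ with $\F_r$, so that $N$ acts by translations and $H$ by multiplications by elements of a subgroup of $\F_r^{\times}$ of order $m$. Following observation (i), we may assume that $M$ is intransitive. Since $N$ is transitive, this forces $N \not\leqs M$, and then $M \cap N = 1$. Hence $|M|$ divides $m$, and since $|G:M| = r\cdot|H:M|$ while $NM = G$ by the maximality of $M$, we get $|M| = m$. So $M$ is itself a complement to $N$ in $G$.

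By Hall's theorem (or Schur--Zassenhaus, as $\gcd(r,m)=1$), all complements to $N$ are conjugate, so $M = H^g = G_{\b}$ for some $\b \in \O$. Since $M$ is maximal, $\b$ is the unique fixed point of $M$ when $m > 1$. If $\b = \a$ then $M_{\a} = H$, which is not a proper subgroup of $G_{\a}$, contradicting the hypothesis that $M_{\a}$ is maximal in $G_{\a}$. So $\b \neq \a$ and $M_{\a} = G_{\a} \cap G_{\b}$. In a Frobenius group of type $\mathrm{AGL}_1$ nontrivial elements of $H$ fix only one point, so $G_{\a}\cap G_{\b} = 1$. Now $M_{\a} = 1$ is maximal in $H \cong C_m$ exactly when $m$ is prime. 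In that case $M \cong C_m$ has prime order, and $M_{\a} = 1$ is therefore maximal in $M$, which is the desired conclusion. The degenerate case $m = 1$, where $G = C_r$, has $M = 1$ and no maximal subgroup of $G_{\a} = 1$, so it is vacuous.

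The only delicate point is justifying that an intransitive maximal subgroup of $G$ must be a complement, and hence a point stabilizer. This follows from the argument above: $M\cap N \in \{1, N\}$ since $N$ has prime order, and $M \geqs N$ would make $M$ transitive. Given this, the rest is the elementary regularity property of $\mathrm{AGL}_1(r)$, namely that its nontrivial elements have at most one fixed point. Alternatively, we can read the conclusion off observation (ii) directly: $\a^M$ has size $|M:M_{\a}| = |M| = m$, which is prime, so $M$ is certainly primitive on $\a^M$.
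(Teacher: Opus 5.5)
Your proposal is correct and follows essentially the paper's argument. The paper puts an intransitive maximal $M$ inside a conjugate $H^x$ directly by Hall's theorem, whereas you reach $M=H^x\neq H$ via $M\cap N=1$, an order count and conjugacy of complements. From there both use the Frobenius property to get $M_{\a}=1$, which forces $|H|=|M|$ to be prime.
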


\begin{proof}
Here $G = N{:}H \leqs {\rm AGL}_1(r)$ is a primitive affine group with socle $N = C_r$ and point stabilizer $H  = G_{\a} \leqs C_{r-1}$. In particular, $G$ is a Frobenius group with kernel $N$ and complement $H$. As explained above in (i), we may assume $M$ is intransitive on $\O$, which in turns means that $|M|$ is indivisible by $r$ (otherwise $M$ contains the unique Sylow $r$-subgroup $N$, which acts transitively on $\O$). Therefore, Hall's theorem implies that $M \leqs H^x < G$ for some $x \in N$, so the maximality of $M$ in $G$ forces $M=H^x$.

If $H^x=H$, then $M=H$ and thus $M_{\a}=G_{\a}$, a contradiction.  
Therefore, since $G$ is a Frobenius group, we have $M_{\a} = H \cap H^x = 1$ and so the maximality of $M_{\a}$ in $H$ forces $|H|$ to be a prime. In particular, $M$ has prime order and we conclude that $M_{\a} = 1$ is maximal in $M$, as required.
\end{proof}

\begin{prop}\label{p:b}
The conclusion to Theorem \ref{thm:main} holds in case (b).
\end{prop}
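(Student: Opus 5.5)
The plan is to show directly that $M$ acts $2$-transitively, and hence primitively, on the orbit $\a^M$; by observation (ii) this gives the conclusion. Here $G = A_r$ or $S_r$ acting naturally on $\O = \{1, \ldots, r\}$ with $r \geqs 5$ prime, so $H = G_{\a}$ is $A_{r-1}$ or $S_{r-1}$. By observation (i) we may assume $M$ is intransitive on $\O$, and by (ii) that $|\a^M| \geqs 2$. In particular $M \ne H$.

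\textbf{Step 1: identify $M$.} Let $A = \a^M$, so $|A| = m$ with $2 \leqs m \leqs r-1$, and let $B = \O \setminus A$, which is nonempty. Then $M$ stabilizes the partition $\{A,B\}$, so
\[
M \leqs (\mathrm{Sym}(A) \times \mathrm{Sym}(B)) \cap G.
\]
The right-hand side is a proper subgroup of $G$, because it is intransitive and $G$ is transitive. The maximality of $M$ therefore forces
\[
M = (\mathrm{Sym}(A) \times \mathrm{Sym}(B)) \cap G.
\]
We never need to know which of these intransitive subgroups are actually maximal in $G$; we only use that $M$ equals one of them.

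\textbf{Step 2: $M$ induces the full symmetric group on $A$.} If $G = S_r$ this is immediate. If $G = A_r$ and $|B| \geqs 2$, then for any permutation $\sigma \in \mathrm{Sym}(A)$ we multiply by a transposition of $B$ if $\sigma$ is odd; the product lies in $M$ and restricts to $\sigma$ on $A$. So $M$ induces $\mathrm{Sym}(A) \cong S_m$ on $A$.

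The remaining case is $|B| = 1$, say $B = \{\b\}$, so $M = G_{\b}$ with $\b \ne \a$. Then $M$ induces $A_{r-1}$ or $S_{r-1}$ on $A = \O \setminus \{\b\}$. This is $2$-transitive since $r - 1 \geqs 4$.

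\textbf{Step 3: conclude.} In every case the group induced by $M$ on $A$ is $2$-transitive, hence primitive. The stabilizer of $\a$ in a primitive action is maximal, and this stabilizer is the full preimage $M_{\a}$, since it contains the kernel of the action. So $M_{\a}$ is maximal in $M$, as required.

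Notice that this argument does not even use the hypothesis that $M_{\a}$ is maximal in $G_{\a}$, beyond the consequence $M_{\a} \neq G_{\a}$ recorded in (ii). The only point needing care, and the step I expect to be the main (mild) obstacle, is the parity issue for $G = A_r$ in Step 2. It is handled by separating the case $|B| = 1$, where $M$ is a point stabilizer, from the case $|B| \geqs 2$.
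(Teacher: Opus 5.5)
Your proof is correct and follows essentially the same route as the paper. Both arguments identify $M$ as the full stabilizer $(\mathrm{Sym}(A)\times\mathrm{Sym}(B))\cap G$ of the orbit partition and then check that $M$ acts primitively on $\alpha^M$, dealing with the parity issue when $G=A_r$. The only difference is the case split. You split on $|B|$, using a transposition of $B$ to realise all of $\mathrm{Sym}(A)$ when $|B|\geqs 2$. The paper splits on $|\alpha^M|$, using primitivity of the alternating group when $|\alpha^M|\geqs 3$ and a double transposition when $|\alpha^M|=2$.
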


\begin{proof}
Here $G = S_r$ or $A_r$ with $r \geqs 5$. Given a subset $\Delta$ of $\O$, let $S_{\Delta}$ be the symmetric group on $\Delta$. Since we may assume $M$ is intransitive on $\O$, it follows that 
\[
M=(S_{\Delta_1} \times S_{\Delta_2})\cap G,
\]
where the $\Delta_i$ are nonempty and $\Omega=\Delta_1 \cup \Delta_2$ is a partition. Without loss of generality, we may assume that $\alpha \in \Delta_1$, in which case $\Delta_1=\alpha^M$ has at least two elements. 

Clearly, if $G = S_r$, or if $G = A_r$ and $|\Delta_1| \geqs 3$, then $M$ acts primitively on $\Delta_1$ and we conclude that $M_{\a}$ is maximal in $M$. And similarly, if $G = A_r$ and $|\Delta_1|=2$, say $\Delta_1=\{\alpha, \alpha'\}$, then $|\Delta_{2}|=r-2\geqs 3$ and we can choose distinct points $\b,\b' \in \Delta_2$ so that $M$ contains the double transposition $(\alpha, \alpha')(\beta, \beta')$. So once again, $M$ acts primitively on $\Delta_1$ and the result follows.
\end{proof}

\begin{prop}\label{p:c}
The conclusion to Theorem \ref{thm:main} holds in case (c).
\end{prop}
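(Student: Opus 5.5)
The plan is a direct case-by-case check. In each of the three cases $G$ is a specific small almost simple group, and its maximal subgroups are known explicitly from the \textsf{ATLAS}. By observations (i) and (ii), it suffices to run through the conjugacy classes of maximal subgroups $M$ of $G$ that are intransitive on $\O$. For each such $M$ and each $M$-orbit $\Delta = \a^M$ with $|\Delta| \geqs 2$, I will decide whether $M_{\a}$ is maximal in $G_{\a}$. Whenever it is, I will show that $M$ acts primitively on $\Delta$. Orbits of length $2$ or $3$, or of prime length, are automatic, since a transitive group of prime degree is primitive. The same holds whenever $M^{\Delta}$ is visibly $2$-transitive. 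So the work is concentrated on the remaining orbits. For those I expect the cleanest route to be showing that $M_{\a}$ is \emph{not} maximal in $G_{\a}$, by exhibiting an intermediate subgroup or simply comparing orders against the list of maximal subgroups of $G_{\a}$.

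First I will treat $G = {\rm PSL}_2(11)$ with $G_{\a} = A_5$. The maximal subgroups of $G$ are two classes of $A_5$, together with $11{:}5$, $D_{12}$ and $A_4$. The subgroup $11{:}5$ is transitive. The non-stabilizer class of $A_5$ has orbits of lengths $5$ and $6$, where it acts as $A_5$ and ${\rm PSL}_2(5)$ respectively; both actions are $2$-transitive. For $D_{12}$ and $A_4$ I will compute the orbit lengths. The point stabilizers in these groups have order at most $6$, and the maximal subgroups of $A_5$ are $A_4$, $D_{10}$ and $S_3$. So the only candidate is $M_{\a} \cong S_3$ in $D_{12}$, acting on an orbit of length $2$, which is primitive. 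Every other orbit either has $M_{\a}$ non-maximal in $A_5$ or has prime or $2$-transitive action.

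Next comes $G = {\rm M}_{11}$ with $G_{\a} = {\rm M}_{10}$. The maximal subgroups of $G$ are ${\rm M}_{10}$, ${\rm PSL}_2(11)$ (transitive), ${\rm M}_9{:}2$ (orbits $2+9$), $S_5$ (orbits $5+6$) and $2.S_4$ (orbits $3+8$). On each orbit of length $2$, $3$, $5$ or $6$, and on the orbit of length $9$ where ${\rm M}_9{:}2$ is $2$-transitive, primitivity is clear. On the orbit of length $8$ for $2.S_4$, the stabilizer has order $6$. That is not maximal in ${\rm M}_{10}$, whose maximal subgroups have orders $360$, $72$, $20$ and $16$, so this orbit needs no check.

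Finally, $G = {\rm M}_{23}$ with $G_{\a} = {\rm M}_{22}$. I expect this to be the main obstacle, simply because there are more maximal subgroups to process. The intransitive ones are ${\rm M}_{22}$, ${\rm PSL}_3(4){:}2_2$ (orbits $2+21$), $2^4{:}A_7$ (orbits $7+16$), $A_8$ (orbits $8+15$), ${\rm M}_{11}$ (orbits $11+12$) and $2^4{:}(3 \times A_5){:}2$ (orbits $3+20$). The transitive one is $23{:}11$, and I will verify the orbit lengths by a quick order computation. All of these actions are visibly $2$-transitive or of prime degree, with one exception: the orbit of length $20$ for $2^4{:}(3\times A_5){:}2$. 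There the point stabilizer has order $576$. I will check this against the maximal subgroups of ${\rm M}_{22}$ and show that it is not one of them (it is properly contained in $2^4{:}S_5$? if not, in a subgroup $2^4{:}A_6$). If that fails, I will instead verify directly that the action on these $20$ points is primitive, the $20$ points being the pairs of a triad. As a safeguard, all of these orbit and maximality checks can be confirmed mechanically in {\sc Magma}.
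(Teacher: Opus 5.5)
Your approach is essentially the paper's. The paper runs the same orbit-by-orbit check over representatives of the maximal subgroups of ${\rm PSL}_2(11)$, ${\rm M}_{11}$ and ${\rm M}_{23}$ in {\sc Magma}, ruling out imprimitive orbits by a numerical comparison with the maximal subgroups of $G_\alpha$; you do it by hand from the ATLAS. The argument goes through, but three details need correcting.

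\textbf{Extra subgroup.} $A_4$ is not maximal in ${\rm PSL}_2(11)$: it lies in $A_5$, and the maximal subgroups are $A_5$, $A_5$, $11{:}5$ and $D_{12}$. Including it is harmless.

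\textbf{Missing class.} You never treat the class of $G_\alpha$ itself, that is, $M = G_\beta$ with $\beta \neq \alpha$, acting on $\Omega \setminus \{\beta\}$ with $M_\alpha = G_{\alpha\beta}$. For ${\rm PSL}_2(11)$ this orbit has length $10$ and $M_\alpha \cong S_3$ is maximal in $A_5$. The action of $A_5$ on these $10$ points has rank $3$, so it is neither of prime degree nor $2$-transitive. Hence your closing sentence for that group is false as stated. The action is still primitive because $S_3$ is maximal in $G_\beta \cong A_5$. A uniform fix works in all three cases: since $G$ is $2$-transitive, an element swapping $\alpha$ and $\beta$ normalizes $G_{\alpha\beta}$ and interchanges $G_\alpha$ and $G_\beta$. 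So $G_{\alpha\beta}$ is maximal in $G_\beta$ whenever it is maximal in $G_\alpha$. This also covers the $10$-orbit of ${\rm M}_{10}$, which you omitted.

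\textbf{Wrong order on the $20$-orbit.} For $2^4{:}(3 \times A_5){:}2$, the stabilizer on the $20$-orbit has order $5760/20 = 288$, not $576$. It is the setwise stabilizer in ${\rm M}_{22}$ of a $3$-set, and it lies in the hexad stabilizer $2^4{:}A_6$, not in $2^4{:}S_5$. No maximal subgroup of ${\rm M}_{22}$ has order $288$, so your primary route (order comparison) still works. Your fallback would fail, however. The five heptads of the $S(4,7,23)$ containing the triad partition the remaining $20$ points into blocks of size $4$, so that action is imprimitive.
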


\begin{proof}
It is straightforward to use {\sc Magma} \cite{magma} to handle these special cases.

To do this, we first construct $G$ as a permutation group on $\O = G/H$ and we construct a set of representatives of the conjugacy classes of maximal subgroups of $G$. For each representative $M$, we calculate the orbits $\O_1, \ldots, \O_t$ of $M$ on $\O$. Then for each $i$, we check that if $M$ acts imprimitively on $\O_i$ then $|\O_i|$ does not coincide with the index of a maximal subgroup of $H$. This allows us to conclude that $M$ acts primitively on $\a^M$ whenever $M_{\a}$ is a maximal subgroup of $G_{\a}$, as required.
\end{proof}

To complete the proof of Theorem \ref{thm:main}, it just remains to consider case (d) above. Here $G$ is an almost simple group with socle $T = {\rm PSL}_d(q)$, where $q = p^f$ for some prime $p$. Since we are assuming
\[
r=\frac{q^d-1}{q-1}
\]
is a prime, it follows that $d$ is a prime. Moreover, $\gcd(d,q-1)=1$.  Indeed, if $d$ divides $q-1$, then $d$ divides $r$ and thus $d = 1+q+\cdots+q^{d-1}$, which is absurd. Therefore, $T = {\rm SL}_d(q)$ and we have $G = T{:}\la \varphi \ra$, where $\varphi$ is a field automorphism of $T$ of order $e$ and $e \geqs 1$ is a divisor of $f$.

In order to handle this case, we need to introduce some additional notation, and we require a preliminary lemma.

Let $V = \mathbb{F}_q^d$ be the natural module for $T = {\rm SL}_d(q)$  and note that we may identify $\O$ with the set $\mathbb{P}(V)$ of $1$-dimensional subspaces of $V$. Fix a $1$-space $\a \in \O$ and observe that $T_{\a} = U{:}L$ is a maximal parabolic subgroup of $T$, with unipotent radical $U$ and Levi subgroup $L = {\rm GL}_{d-1}(q)$. Here $U$ is an elementary abelian $p$-group of order $q^{d-1}$, which we may identify with the natural module for $L$. In particular, we have $H = G_{\a} = U{:}(L{:}\la \varphi\ra)$ and $U$ is the unique minimal normal subgroup of $H$.

Visibly, $L$ preserves a decomposition $V = \a \oplus W$, where $W$ has dimension $d-1$. In terms of this decomposition, we can write
\begin{equation}\label{e:U}
U = \{ u_f \,:\, f \in {\rm Hom}_{\mathbb{F}_q}(W,\a) \},\;\; L = \{\ell_g \,:\, g \in {\rm GL}(W)\}
\end{equation}
as subgroups of ${\rm SL}(V)$, where
\[
u_f(a+w) = a+w+f(w),\;\; \ell_g(a+w) = (\det g)^{-1}a + g(w)
\]
for all $a \in \a$, $w \in W$.

Let $T_W$ be the setwise stabilizer in $T$ of the hyperplane $W$; this is a maximal parabolic subgroup of $T$ with Levi decomposition $T_W = Q{:}L$. For a concrete description, we can take
\begin{equation}\label{e:Q}
Q=\{ v_h \, : \, h \in {\rm Hom}_{\F_q}(\alpha,W)\},\;\; v_h(a+w)=a+w+h(a)
\end{equation}
for all $a\in \a$, $w \in W$. Note that $T_{\a}$ and $T_W$ are conjugate subgroups in ${\rm Aut}(T)$ with the property $T_{\a} \cap T_W = L$.

\begin{lem}\label{l:psl}
In terms of the above notation, the following statements hold:
\begin{itemize}\addtolength{\itemsep}{0.2\baselineskip}
\item[{\rm (i)}] If $J$ is a complement of $U$ in $T_{\a}$, then $J = L^x$ for some $x \in U$.
\item[{\rm (ii)}] If $d \geqs 3$ and $S$ is a subgroup of $T$ containing $L$, then $S = L$, $T_\alpha$, $T_W$ or $T$. 
\item[{\rm (iii)}] The unipotent radical $Q$ of $T_W$ acts regularly on the set $\O \setminus \mathbb{P}(W)$.
\end{itemize}
\end{lem}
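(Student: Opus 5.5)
We prove the three parts separately, using the explicit descriptions \eqref{e:U} and \eqref{e:Q} throughout.

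For (i), the plan is to compute the first cohomology group $H^1(L,U)$. Here $U$ is the natural module (twisted by a determinant factor) for $L = {\rm GL}_{d-1}(q)$. The scalar matrices $Z(L) \cong C_{q-1}$ act on $U$ via $u_{f} \mapsto u_{\lambda^{-d} f}$, since $\ell_{\lambda I}$ multiplies $\a$ by $\lambda^{-(d-1)}$ and $W$ by $\lambda$. Because $\gcd(d,q-1)=1$, we can choose $\lambda$ with $\lambda^d \ne 1$ whenever $q>2$. Then $Z(L)$ is a central $p'$-subgroup acting fixed-point-freely on $U$, and the standard argument (the inflation–restriction sequence, or coprime action: $H^1(L,U)$ is annihilated by $|Z(L)|$ and by $p$) gives $H^1(L,U)=0$. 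The exceptional case $q=2$ is where the obstacle lies, since then $Z(L)=1$. For $q=2$ we have $T_\a = 2^{d-1}{:}{\rm GL}_{d-1}(2)$, and $H^1({\rm GL}_{n}(2),\F_2^n)$ is known to be nonzero only for $n=3$, which gives $d=4$. That is not prime, so $r$ could not be prime and the case is excluded. Even so, I would prefer a uniform argument. An alternative is to regard a complement $J$ as the stabilizer of a hyperplane complementary to $\a$: show that $J$ acts on $V/\a$ as ${\rm GL}(W)$ and hence fixes a complement to $\a$ in $V$. I would handle this last point by citing the known values of $H^1$, using that $d$ is prime. Once $H^1=0$, all complements are $U$-conjugate, which is the statement of (i).

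For (ii), the plan is to view $L$ as the stabilizer of the decomposition $V=\a\oplus W$. Suppose $S > L$. Then $S$ contains some element $s$ moving $\a$ or moving $W$. The orbits of $L$ on $\O$ are $\{\a\}$, $\mathbb{P}(W)$, and the remaining points, and the last set is a single orbit because $L$ acts transitively on pairs (vector of $W$, nonzero scalar). We split into cases according to whether $S$ fixes $\a$, fixes $W$, or neither. If $S$ fixes $\a$ then $S \leqs T_\a$, and $S \cap U$ is a nonzero $L$-submodule of the irreducible module $U$, so $S=T_\a$. The symmetric argument, using $Q$, handles the case where $S$ fixes $W$. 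If $S$ fixes neither, then $\la S\ra$ contains elements of both $U$ and $Q$ (after multiplying by suitable elements), hence contains $\la U,Q,L\ra = T$. The irreducibility of $U$ and $Q$ as $L$-modules is where $d\geqs 3$ is used; it also makes the orbit structure clean.

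For (iii), the argument is a direct computation. The points of $\O\setminus\mathbb{P}(W)$ are exactly the spans $\la a+w\ra$ with $a\in\a$ nonzero, and after normalising $a$ each such point has the form $\la a_0+w\ra$ for a unique $w\in W$. Then $v_h$ sends $\la a_0+w\ra$ to $\la a_0+w+h(a_0)\ra$. The map $h\mapsto h(a_0)$ is a bijection ${\rm Hom}(\a,W)\to W$, so the action is regular. The main obstacle overall is (i), specifically the small-field case.
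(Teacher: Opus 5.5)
Parts (i) and (iii) are fine and follow the paper. In (i), a central scalar of $L$ acting on $U$ as $\lambda^{-d}\neq 1$ kills $H^1(L,U)$ when $q>2$. Your inflation--restriction version works, since $H^1(Z(L),U)=0$ and $U^{Z(L)}=0$; the phrase ``annihilated by $|Z(L)|$ and by $p$'' is not the right justification. Bell's computation handles $q=2$, with $d=4$ excluded because $d$ is prime. Your ``uniform alternative'' falls back on the same $H^1$ values, so it adds nothing but is harmless. In (iii), your explicit translation action of $Q$ is the paper's computation.

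\textbf{The gap is in (ii).} It occurs in the case where $S$ fixes neither $\alpha$ nor $W$, i.e.\ $S$ is irreducible on $V$. This is the only nontrivial part of (ii), and you dispose of it by asserting that $S$ contains nontrivial elements of $U$ and $Q$ ``after multiplying by suitable elements''. No mechanism is given, and the obvious one cannot work. If $s\in S\setminus(T_\alpha\cup T_W)$, then the whole double coset $LsL$ is disjoint from $T_\alpha\cup T_W$, so no product of $s$ with elements of $L$ lies in $U$ or $Q$.

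Your own first case shows $S\cap T_\alpha\in\{L,T_\alpha\}$ and $S\cap T_W\in\{L,T_W\}$. By maximality of the two parabolics, everything therefore reduces to excluding $L<S<T$ with $S\cap T_\alpha=S\cap T_W=L$. The claim ``$S$ meets $U$ nontrivially'' is a restatement of exactly this, not a proof of it.

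A clear symptom is that nothing in your sketch distinguishes $d\geqs 3$ from $d=2$. Your reducible cases go through verbatim for $d=2$: with $q\geqs 4$ even, $U$ and $Q$ are still irreducible $L$-modules, since $L$ acts on $U\cong\F_q$ by the scalars $\mu^{-2}$, which exhaust $\F_q^\times$. Your hand-waved step makes no reference to $d$. Yet for $d=2$ the group $N_T(L)=L.2\cong D_{2(q-1)}$ is a proper overgroup of $L$ that interchanges $\alpha$ and $W$ and meets $U$ and $Q$ trivially. So the hypothesis $d\geqs 3$ must be used precisely in the irreducible case, not, as you say, for the irreducibility of $U$ and $Q$.

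The paper closes this case by appealing to Aschbacher's theorem to show that $L=\GL_{d-1}(q)$ lies in no proper irreducible subgroup of $T$. Alternatively, it shows directly that $T=\langle L,x\rangle$ for any $x\in S\setminus(T_\alpha\cup T_W)$. You need an argument of this kind to complete (ii).
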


\begin{proof}
First consider (i). We need to show that the first cohomology group $H^1(L,U)$ is trivial. For $d=2$, this is clear since $|U| = q$ and $|L|=q-1$ are coprime. Now assume $d \geqs 3$. If $q \geqs 3$ then $Z(L)$ contains a nontrivial element acting as a scalar on $U$, which forces $H^1(L,U) = 0$ (see \cite[Proposition 2.7(b)]{Sah1968}, for example, which is sometimes referred to as Sah's Lemma). Finally, if $q=2$ then $L = {\rm SL}_{d-1}(2)$ and the main theorem of \cite{Bell} gives $H^1(L,U) = 0$ (note that $H^1(L,U) \ne 0$ when $(d,q) = (4,2)$, but this case does not arise since $d$ is a prime in the setting we are interested in).

Next let us turn to (ii), so $d \geqs 3$. First observe that $T_{\alpha} = U{:}L$ and $T_{W} = Q{:}L$, where $L$ acts irreducibly on $U$ and $Q$. Therefore, $L$ is maximal in both $T_{\a}$ and $T_W$.

Let $S \ne L$ be an overgroup of $L$ in $T$. If $S$ acts reducibly on $V$, then $L<S\leqs T_X<T$ for some subspace $X$ of $V$. But $\a$ and $W$ are the only proper nonzero subspaces of $V$ preserved by $L$, so $X \in \{\a,W\}$ and then the maximality of $L$ in $T_X$ forces $S = T_X$. Now assume $S$ is irreducible on $V$. By appealing to Aschbacher's theorem on the subgroup structure of finite classical groups (see \cite{asch}), it is straightforward to show that $L$ is not contained in a proper irreducible subgroup of $T$, so $S = T$ and the result follows. Alternatively, if $S$ is irreducible then there exists an element $x \in S \setminus (T_{\a} \cup T_W)$ and we can show that $T = \la L, x \ra$, which means that $S = T$.  

Finally, let us consider (iii). Fix a $1$-space $\b = \la a+w \ra$ in $\O \setminus \mathbb{P}(W)$, where $0 \ne a \in \a$ and $w \in W$. Working with the explicit description of $Q$ in \eqref{e:Q}, let $1 \ne v_h \in Q$ and note that $0 \ne h \in {\rm Hom}_{\mathbb{F}_q}(\a,W)$. Then 
\[
v_h(a+w) = a + (w+h(a)) \not\in \b,
\]
so $Q_{\b} = 1$ and the result follows since $|Q| = |\O \setminus \mathbb{P}(W)| = q^{d-1}$.
\end{proof}

\begin{prop}\label{p:d}
The conclusion to Theorem \ref{thm:main} holds in case (d).
\end{prop}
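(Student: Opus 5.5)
The plan is to exploit the structure of $H = G_{\a} = U{:}(L{:}\la\varphi\ra)$ rather than classify all maximal subgroups of $G$. Let $M$ be a maximal subgroup of $G$ such that $M_{\a}$ is maximal in $H$. By observation (i) we may assume $M$ is intransitive on $\O$. Since $r$ is prime, this is equivalent to $r \nmid |M|$. Since $U$ is the unique minimal normal subgroup of $H$ and is abelian, maximality of $M_{\a}$ in $H$ gives a dichotomy:
\begin{itemize}
\item[{\rm (A)}] $U \leqs M_{\a}$;
\item[{\rm (B)}] $M_{\a}U = H$.
\end{itemize}
In case (B), $M_{\a}\cap U$ is normalized by $M_{\a}$ and by $U$, hence is normal in $H$. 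As $M_{\a}$ is proper in $H$ and does not contain $U$, minimality forces $M_{\a}\cap U = 1$. So $M_{\a}$ is a complement to $U$ in $H$.

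\textbf{Case (B).} First I would pass to $T$. The subgroup $M_{\a}\cap T$ is a complement to $U$ in $T_{\a}$, so Lemma \ref{l:psl}(i) gives $M_{\a}\cap T = L^x$ with $x\in U$. Replacing $M$ by $M^{x^{-1}}$, which still contains a conjugate of the relevant point stabilizer data since $x$ fixes $\a$, we may assume $L \leqs M\cap T$.

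Now suppose $d\geqs 3$. By Lemma \ref{l:psl}(ii), $M\cap T \in \{L, T_{\a}, T_W, T\}$.
\begin{itemize}
\item $M\cap T = T$ is impossible, since $M$ is intransitive.
\item $M\cap T = T_{\a}$ gives $M\leqs N_G(T_{\a}) = H$, so $M = H$ and $M_{\a}=H$, a contradiction.
\item $M\cap T = L$ is impossible: $L$ fixes only the subspaces $\a$ and $W$, so $N_G(L)$ preserves $\{\a,W\}$. Since $\a$ and $W$ have different dimensions (no graph automorphisms occur here), $N_G(L)\leqs G_{\a}$, and we are back in the previous contradiction.
\item Hence $M\cap T = T_W$ and $M = G_W$.
\end{itemize}
For $M = G_W$ the orbits of $M$ are $\mathbb{P}(W)$ and $\O\setminus\mathbb{P}(W)$, and $\a$ lies in the latter. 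By Lemma \ref{l:psl}(iii), $Q$ is regular on $\a^M=\O\setminus\mathbb{P}(W)$, so $M = Q{:}M_{\a}$ with $M_{\a} = L{:}\la\varphi\ra$. Because $L$ acts irreducibly on $Q$, the subgroup $Q$ is a minimal normal subgroup of $M$. It follows that $M_{\a}$ is maximal in $M$, which is exactly primitivity on $\a^M$.

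For $d=2$ I would argue directly. Here $r=q+1$ is a Fermat prime, $q=2^f$, and $L$ is a split torus $C_{q-1}$. By Dickson's theorem, the maximal overgroups of $L$ in $G$ not contained in $H$ are of dihedral type $N_G(L)$. Then $M_{\a}$ has index $2$ in $M$, so $|\a^M|=2$ and the action is trivially primitive.

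\textbf{Case (A).} Here $U\leqs M$ and $M_{\a}/U$ is a maximal subgroup of $L{:}\la\varphi\ra$. The group $U$ fixes $\a$ pointwise and acts trivially on $V/\a$. It therefore fixes every hyperplane containing $\a$, and it is transitive on the points outside any such hyperplane. Its orbits on $\O\setminus\{\a\}$ are the sets of size $q$ in the lines through $\a$, minus $\a$ itself.

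Since $M\neq H$, the orbit $\a^M$ is a union of such $U$-orbits together with $\a$, and $M$ is intransitive. The key claim is that $M$ must be reducible on $V$. From that I would deduce $M = G_{W'}$ for some hyperplane $W'\ni\a$, using that $M\not\leqs H$ and that the stabilizers of other $U$-invariant subspaces lie in $H$. Then $\a^M = \mathbb{P}(W')$, and $M$ induces a group containing ${\rm PSL}(W')$ on it. This action is $2$-transitive and hence primitive.

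\textbf{Main obstacle.} The step I expect to be the main obstacle is ruling out irreducible $M$ containing $U$ with $r\nmid|M|$. My first attempt is a transvection-group argument: $U$ is a full root-type subgroup consisting of transvections with centre $\a$. By McLaughlin's classification of irreducible subgroups of ${\rm SL}_d(q)$ generated by transvections, an irreducible overgroup of $U$ in $T$ is ${\rm SL}_d(q_0)$ with $q_0 = q$ (for $p$ odd, or the symplectic/orthogonal exceptions). Such a group would be $T$ itself when $d$ is an odd prime, contradicting intransitivity. If that proves awkward, the fallback is to apply Aschbacher's theorem with the condition $r\nmid|M|$: the primitive prime divisor $r$ of $q^d-1$ eliminates the $\mathcal{C}_3$ subgroups, while $U\leqs M$ eliminates $\mathcal{C}_2$, $\mathcal{C}_5$, $\mathcal{C}_6$, $\mathcal{C}_8$ and $\mathcal{C}_9$ on order or structural grounds.
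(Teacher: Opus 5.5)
\textbf{Verdict: genuine gap, plus two case errors.} Your architecture matches the paper's: the same dichotomy, and your Case (B) for $d\geqs 3$ is essentially the paper's Case 2(a). But Case (A), which carries the real content, is not proved.

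\textbf{The reducibility claim in Case (A).} That $M$ is reducible is only sketched, and the McLaughlin citation is misapplied. That theorem classifies irreducible subgroups \emph{generated by} transvections, not irreducible overgroups of $U$, and neither $M$ nor $J=M\cap T$ need be generated by transvections. To use it, you must pass to $N=\la U^m : m\in M\ra\leqs J$ and show $N$ is irreducible when $M$ is. This holds because a nonzero $U^m$-invariant subspace contains the centre $\alpha^m$, so every nonzero $N$-invariant subspace contains the $M$-invariant span of $\alpha^M$. You then need $|U|=q^{d-1}$ to exclude the proper members of the list, including ${\rm SU}_d(q_0)$, which you do not mention. The Aschbacher fallback leaves $\mathcal{C}_9$ unjustified.

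The paper instead cites Timmesfeld's theorem to place $J$ inside a maximal parabolic $T_X$, and then intersects the $M$-conjugates of $X$. There is also a short elementary fix. Consider the set of points $\gamma$ such that $J$ contains every transvection with centre $\gamma$. It is $M$-invariant since $J\normeq M$. It is closed under joining lines: if $\la a\ra$ and $\la b\ra$ lie in it, the transvections $v\mapsto v+\phi(v)a$ with $\phi(a)=0$, $\phi(b)=\lambda$ carry $\la b\ra$ to $\la b+\lambda a\ra$. Hence the set equals $\mathbb{P}(Z)$ for an $M$-invariant subspace $Z\supseteq\alpha$, and $Z=V$ would force $J=T$.

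\textbf{The hyperplane claim is false.} Your deduction $M=G_{W'}$ with $W'$ a hyperplane is wrong, and so is its justification. The $U$-invariant subspaces are exactly $0$ and the subspaces containing $\alpha$ (your own orbit computation shows this), and $G_Y\not\leqs H$ for every $Y$ with $\alpha\subsetneq Y\subsetneq V$. For example, with $d=5$, $q=2$ and $\dim Y=2$, the subgroup $M=G_Y$ genuinely occurs. There $M_\alpha$ is maximal in $H$, since it contains $U$ and corresponds to the stabilizer of the $1$-space $Y/\alpha$ in the action of $H/U$ on $V/\alpha$. The correct conclusion is $M=G_Y$ with $2\leqs\dim Y<d$, as in the paper, and your $2$-transitivity argument then applies verbatim. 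Once reducibility of $M$ is known, this follows at once: any $M$-invariant proper nonzero subspace is $U$-invariant, hence contains $\alpha$, and it cannot equal $\alpha$ since $M\ne H$.

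\textbf{The case $d=2$ in Case (B).} You assert that the only maximal overgroups of $L$ outside $H$ are of dihedral type. This overlooks the other Borel subgroup $G_W$, which does arise: $G_\alpha\cap G_W$ is a complement to $U$ and hence maximal in $H$. It is handled exactly as $G_W$ for $d\geqs 3$, using $M=Q{:}M_\alpha$ with $M_\alpha$ irreducible on $Q$.
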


\begin{proof}
Let $M$ be a maximal subgroup of $G$ such that $M_{\a}$ is maximal in $H = G_{\a}$. Set $J = M \cap T$ and note that $J \normeq M$. Recall that we may as well assume $M$ acts intransitively on $\O$, so the transitivity of $T$ (as a nontrivial normal subgroup of the primitive group $G$) implies that $M$ is core-free and thus $G = TM$. In particular, $J$ is a proper subgroup of $T$. There are now two cases to consider.

\vs

\noindent \emph{Case 1.} $U \leqs M_{\a}$

\vs

First assume $M_{\a}$ contains $U$, which is the unipotent radical of $T_{\a}$. Then
\[
U\leqs M_{\a} \cap T \leqs M \cap T=J
\] 
and thus $J$ contains the unipotent radical of a parabolic subgroup of $T$. So by the main theorem of \cite{Timm}, it follows that $J$ is contained in a maximal parabolic subgroup $T_X$ of $T$, where $X$ is a proper nonzero subspace of $V$.

For each $g \in M$ we have $J = J^g \leqs T_{X^g}$ since $J \normeq M$. It follows that $J \leqs T_{Y}$, where
\[
Y =\bigcap_{g \in M} X^g.
\]
We claim that $\a \subseteq Y$, so $Y$ is nonzero.

Seeking a contradiction, suppose $\alpha \nsubseteq X^g$ for some $g \in M$. Then by considering the decomposition $V = \a \oplus W$, there exists a vector $v = a+w \in X^g$ with $a \in \a$ and $0 \ne w \in W$. Fix $f \in {\rm Hom}_{\F_q}(W,\alpha)$ such that $f(w) \ne 0$, so we have $\a = \la f(w) \ra$. Since $X^g$ is $J$-invariant it is also $U$-invariant, so by working with the description of $U$ in \eqref{e:U} we see that
\[
f(w) = u_f(v) - v \in X^g
\]
and thus $\a \subseteq X^g$, a contradiction. 

Therefore, $Y$ is a proper nonzero $M$-invariant $k$-space containing $\a$, so $M \leqs G_{Y} <G$ and the maximality of $M$ implies that $M = G_Y$ and thus $\alpha^M \subseteq \mathbb{P}(Y)$. If $Y = \a$ then  $M=H$ and $M_{\a} = H$, which is a contradiction (since we are assuming $M_{\a}$ is maximal in $H = G_{\a}$). Therefore, $\a$ is a proper subspace of $Y$ and thus $2 \leqs k < d$.

Finally, let us observe that the parabolic subgroup $T_{Y}$ induces the full linear group $\GL(Y)$ on $Y$. Therefore, since $T_Y \leqs G_Y=M$, it follows that $\alpha^M=\mathbb{P}(Y)$. Moreover, the action of $M$ on $\a^M$ is $2$-transitive and therefore primitive. This completes the proof in Case 1.

\vs

\noindent \emph{Case 2.} $U \not\leqs M_{\a}$

\vs

To complete the proof, let us assume $U \not\leqs M_{\a}$. By the maximality of $M_{\a}$ in $G_{\a}$ we have $G_{\a} = UM_{\a}$.  Since $U \cap M_{\a}$ is normalized by $M_{\a}$ and centralized by the
abelian group $U$, we deduce that $U \cap M_{\a} \normeq G_{\a}$.  Therefore, $U \cap M_{\a} = 1$ since $U$ is a minimal normal subgroup of $G_{\a}$, whence $M_{\a}$ is a complement to $U$ in $G_{\a}$. In addition, 
\[
T_{\alpha}=G_{\a} \cap T=(UM_{\a})\cap T=U(M_{\a}\cap T)
\]
and thus Lemma \ref{l:psl}(i) implies that $M_{\a} \cap T$ and $L$  are
$U$-conjugate. Therefore, without loss of generality, we may assume that $M_{\a} \cap T=L$. We now divide the remainder of the argument  into two subcases according to $d$.

\vs

\noindent \emph{Case 2(a).} $d \geqs 3$

\vs

Suppose $d \geqs 3$. Since $J=M \cap T$ contains $L = M_{\a} \cap T$, Lemma \ref{l:psl}(ii) implies that $J = L$, $T_{\a}$, $T_W$ or $T$. Of course, if $J = T$ then $M$ is transitive on $\Omega$, contrary to our earlier assumption. Next suppose $J \leqs T_{\a}$, so $J$ fixes $\a$.
Since $J \normeq M$ we have $J=J^g$ for all $g \in M$, so $J$ fixes every point in $\a^M$. Since $d \geqs 3$, it is easy to see that 
$\Fix_{\Omega}(L)=\{\alpha\}$, so 
\[
\alpha^M \subseteq \Fix_{\Omega}(J) \subseteq \Fix_{\Omega}(L)=\{\alpha\}
\]
and thus $\alpha^M=\{\alpha\}$. This means that $M \leqs G_{\alpha}$ and hence $M_{\a} = G_{\a}$, which is a contradiction.

To complete the argument in Case 2(a), we may assume $J = T_W =  Q{:}L$ as above. Since $L$ acts transitively on $\mathbb{P}(W)$ and $Q$ is transitive on $\O \setminus \mathbb{P}(W)$, it follows that $J$ has exactly two orbits on $\O$, namely $\mathbb{P}(W)$ and $\O \setminus \mathbb{P}(W)$, which must coincide with the orbits of $M$ on $\O$ since we are assuming $M$ is intransitive. In particular, we have 
\[
|M:M_{\a}| = |\O \setminus \mathbb{P}(W)| = q^{d-1}.
\]
Since $M$ stabilizes $W$ we have $M \leqs G_W$ and thus $M=G_W$ by the maximality of $M$ in $G$. By Lemma \ref{l:psl}(iii), $Q$ acts regularly on $\Omega \setminus \mathbb{P}(W)$, so $Q_{\a} = 1$ and thus $Q \cap M_{\a} = 1$. It follows that $M = G_W = Q{:}M_{\a}$ with $Q \normeq M$. Finally, since $M_{\a}$ acts irreducibly on $Q$, we conclude that $M_{\a}$ is a maximal subgroup of $M$, as required.

\vs

\noindent \emph{Case 2(b).} $d=2$

\vs

Finally, let us assume $d=2$, in which case $r = q+1$ is a Fermat prime and $q = 2^{2^m}$ with $m \geqs 1$. If $q = 4$ then $T \cong A_5$ and we have already handled this possibility in Proposition \ref{p:b}. So we may assume $q \geqs 16$. Note that $T_{\a} = U{:}L$ is a Borel subgroup with $|U| = q$, $L = C_{q-1}$ and $\Fix_{\Omega}(L)=\{\alpha, W\}$. In addition, it is clear that $T_{\a}$ and $T_W$ are the only Borel subgroups of $T$ containing $L$, corresponding to the two points fixed by $L$.

In fact, by inspecting Dickson's list of the maximal subgroups of $T$ (see \cite[Tables 8.1, 8.2]{BHR}, for example), we observe that $L$ has exactly three maximal overgroups in $T$, namely $T_{\a}$, $T_W$ and $N_L(T) = L.2 = D_{2(q-1)}$. For example, $L$ is not contained in a subfield subgroup ${\rm SL}_2(q_0)$ with $q=q_0^2$ since the latter does not contain an element of order $q-1$. Since $J = M \cap T$ contains $L$, it follows that $J \leqs T_{\a}$, $T_W$ or $N_T(L)$.

First assume $J \not\leqs N_T(L)$, so $L < J \leqs T_\gamma$ for some $\gamma \in \{\alpha, W\}$ and the maximality of $L$ in $T_{\gamma}$ forces $J = T_{\gamma}$. Since $J \normeq M$ and $\Fix_{\Omega}(T_{\gamma})=\{\gamma\}$, 
we deduce that $M \leqs G_\gamma$ and so the maximality of $M$ implies that $M=G_\gamma$. If $\gamma=\alpha$, then $M_{\a} = G_{\a}$ and we reach a contradiction. Therefore, $\gamma=W$ and we have $M = Q{:}M_{\a}$, where $Q$ is the unipotent radical of $M = G_W$. And since $M_{\a}$ acts irreducibly on $Q$, we conclude that $M_{\a}$ is maximal in $M$.

Finally, let us assume $L \leqs J \leqs N_T(L)$, so $J = L$ or $N_T(L) = L.2$. In both cases, $L$ is characteristic in $J$ (recall that
$q-1$ is odd). Since $J \normeq M$ we deduce that $M \leqs N_G(L)$ and thus $M=N_G(L)$ by maximality. Then $M$ acts transitively on 
$\Fix_{\Omega}(L)=\{\alpha, W\}$ and $|M:M_{\a}| = 2$. In particular, $M_{\a}$ is maximal in $M$ and the proof of the proposition is complete.
\end{proof}

This completes the proof of Theorem \ref{thm:main} and we will now use it  to prove Theorem \ref{thm-max}. This relies on the following result, which is \cite[Lemma 1]{MG}. In the statement, we write $H_G$ and $M_G$ for the cores of $H$ and $M$, respectively (so $H_G$ is the largest normal subgroup of $G$ contained in $H$).

\begin{lem}\label{lem:core}
Let $G$ be a finite group and let $H$ be a proper subgroup of $G$. Suppose $M$ and $K$ are maximal overgroups of $H$ in $G$ such that $H$ is maximal in $M$, but not maximal in $K$. Then $H = M\cap K$ and  $H_G = M_G$.
\end{lem}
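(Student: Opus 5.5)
We prove both conclusions directly from the two maximality hypotheses, using only the subgroup lattice. Throughout, $H<M$ and $H<K$ with $M,K$ maximal in $G$, $H$ maximal in $M$, and $H$ not maximal in $K$. Note first that $M \ne K$: otherwise $H$ would be both maximal and non-maximal in the same group.

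\emph{Step 1: $H = M\cap K$.} Clearly $H \leqs M\cap K \leqs M$. Since $H$ is maximal in $M$, either $M\cap K = H$ or $M\cap K = M$. In the second case $M \leqs K$, and the maximality of $M$ in $G$ (with $K \ne G$) forces $M = K$, which we have excluded. Hence $H = M\cap K$.

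\emph{Step 2: $H_G = M_G$.} Since $H \leqs M$, we have $H_G \leqs M_G$; it remains to show $M_G \leqs H$. Set $N = M_G$, a normal subgroup of $G$ contained in $M$. Consider the subgroup $HN$. As $N \normeq G$, this is a subgroup with $H \leqs HN \leqs M$. By the maximality of $H$ in $M$, either $HN = H$ or $HN = M$.
\begin{itemize}
\item If $HN = H$, then $N \leqs H$ and we are done.
\item Suppose instead $HN = M$. Since $H \leqs K$ and $N$ normalizes $K$ (as $N \normeq G$), the product $KN$ is a subgroup containing $K$. By the maximality of $K$, either $KN = K$ or $KN = G$.
\end{itemize}

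In the subcase $KN = K$, we have $N \leqs K$, so $N \leqs M\cap K = H$ by Step 1. This contradicts $HN = M \ne H$, so in fact $N \leqs H$.

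It remains to rule out $KN = G$ (still with $HN = M$). Here we use Dedekind's modular law, which applies because $N \leqs M$:
\[
K \cap M = K \cap HN = H(K\cap N).
\]
Since $K\cap N \leqs M \cap K = H$, this is consistent and yields nothing new. So instead we work with intermediate subgroups. Because $H$ is not maximal in $K$, choose a subgroup $R$ with $H < R < K$. Then $RN$ is a subgroup containing $HN = M$, so $RN \in \{M, G\}$.
\begin{itemize}
\item If $RN = M$, then $R \leqs M\cap K = H$, a contradiction.
\item Hence $RN = G$. Applying Dedekind's law with $R \leqs K$ gives $K = K\cap RN = R(K\cap N)$. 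Since $K\cap N \leqs H \leqs R$, this says $K = R$, again a contradiction.
\end{itemize}
So $HN = M$ is impossible, and therefore $N \leqs H$. Thus $H_G = M_G$.

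The main obstacle is Step 2, specifically the subcase $HN = M$. The decisive move is to compare $N$ against an intermediate subgroup $R$ with $H<R<K$, which exists precisely because $H$ is not maximal in $K$, and then apply Dedekind's law together with $K\cap N \leqs M\cap K = H$.
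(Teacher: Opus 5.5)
Your proof is correct and complete. The paper gives no proof of this lemma to compare against: it quotes it from \cite[Lemma 1]{MG}. Your argument is therefore a self-contained proof of a cited result, and it works as written.

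Two small remarks.

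First, to get an $R$ with $H<R<K$ from ``$H$ is not maximal in $K$'', you need $H\neq K$. This holds because $H<M<G$ means $H$ is not maximal in $G$, whereas $K$ is. You should say this rather than listing $H<K$ among your standing assumptions.

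Second, the write-up can be streamlined. The split on $KN\in\{K,G\}$ and the abandoned modular-law computation are unnecessary: the intermediate-subgroup argument alone rules out $HN=M$. It uses only three facts:
\begin{itemize}
\item $RN\geqs HN=M$, so $RN\in\{M,G\}$;
\item $R\cap M=H$, since $H\leqs R\cap M\leqs K\cap M=H$;
\item $K\cap N\leqs K\cap M=H$.
\end{itemize}

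The same argument can be phrased via the correspondence theorem. Suppose $M_G\not\leqs H$. Then $M=HM_G$, so $KM_G$ contains both $K$ and $M\neq K$, giving $G=KM_G$. Also $K\cap M_G\leqs K\cap M=H$. The isomorphism $K/(K\cap M_G)\cong G/M_G$ carries $H/(K\cap M_G)$ onto the maximal subgroup $HM_G/M_G=M/M_G$, which forces $H$ to be maximal in $K$. Your Dedekind-law step with $R$ is exactly the elementwise version of this. The quotient formulation is shorter; yours avoids the isomorphism theorems at the cost of choosing a witness $R$.
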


We are now ready to prove Theorem \ref{thm-max}.

\begin{proof}[Proof of Theorem \ref{thm-max}]
Suppose $G$ is a counterexample of minimal order, so $G$ has subgroups $H$, $M$ and $K$, where $M$ and $K$ are maximal overgroups of $H$ in $G$ with $|G:M|$ a prime, and $H$ is maximal in $M$, but not in $K$. Then $M \neq K$ and $H=M \cap K$. In addition, the minimality of $|G|$ implies that $H$ is core-free (otherwise we could pass to the quotient $G/H_G$), so $M$ is also core-free by Lemma \ref{lem:core}. 

Setting $\O = G/M$, we can now view $G \leqs {\rm Sym}(\O)$ as a transitive permutation group of prime degree with point stabilizer $G_{\a} = M$. Since $H=M \cap K=K_{\alpha}$ is maximal in $M$ by hypothesis, Theorem \ref{thm:main} now implies that $H$ is maximal in $K$. This final contradiction completes the proof of the theorem. 
\end{proof}

\section{Proof of Theorem \ref{thm-max2}}\label{s:B}

In this final section, we will prove Theorem \ref{thm-max2}. To do this, let $n$ be a positive integer and suppose the assertion $(\mathcal{P}_n)$ is false. So this means that there exists a finite group $G$ with a proper subgroup $H$ contained in maximal subgroups $M$ and $K$ of $G$ with the property that $|G:M|=n$ and $H$ is maximal in $M$ but not in $K$. By passing to $G/H_G$, we may assume that $H_G=1$ and then Lemma \ref{lem:core} implies that $M$ is core-free. So now we can view $G \leqs {\rm Sym}(\O)$ as a primitive permutation group on $\O = G/M$ with point stabilizer $M$. 

In other words, we have shown that if $(\mathcal{P}_n)$ is false, then this must be witnessed by a primitive subgroup $G \leqs S_n$. For $n$ odd, the next result rules out $S_n$ and $A_n$ as possible witnesses.

\begin{prop}\label{p:ansn}
Let $n \geqs 5$ be an odd integer and let $G = S_n$ or $A_n$ in its natural action of degree $n$. Fix a point stabilizer $M = G_{\a} = S_{n-1} \cap G$ and let $H$ be a maximal subgroup of $M$. Then $H$ is a second maximal subgroup of $G$.
\end{prop}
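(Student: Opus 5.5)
We will argue as in the proof of Theorem \ref{thm-max}. Let $K$ be a maximal subgroup of $G$ containing $H$ and suppose, for a contradiction, that $H$ is not maximal in $K$. Then $K \ne M$, and Lemma \ref{lem:core} gives $H = M \cap K = K_{\a}$. So it suffices to show the following: whenever $K_{\a}$ is maximal in $M = G_{\a}$, the group $K$ acts primitively on the orbit $\a^K$, which forces $K_{\a}$ to be maximal in $K$. Note that $|\a^K| \geqs 2$ since $K \ne M$. By the O'Nan--Scott theorem, $K$ is either primitive, imprimitive (transitive) or intransitive on $\O = \{1, \ldots, n\}$. If $K$ is primitive there is nothing to prove, so two cases remain.

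\emph{Imprimitive case.} Suppose $K$ is transitive but imprimitive. By maximality, $K = (S_a \wr S_b) \cap G$ with $n = ab$ and $a,b \geqs 2$; since $n$ is odd, in fact $a,b \geqs 3$. Let $B$ be the block containing $\a$. Then $K_{\a}$ preserves the set $B \setminus \{\a\}$ of size $a-1$ and its complement in $\O \setminus \{\a\}$, which has size $n-a$. Hence
\[
K_{\a} \leqs (S_{a-1} \times S_{n-a}) \cap G < M.
\]
I expect this first containment to be proper: $K_{\a}$ also preserves the block system on $\O \setminus B$, whereas the intransitive subgroup moves points between different blocks (here $n-a \geqs 2a \geqs 6$). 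The second containment is proper because $a-1 \geqs 2$. So $K_{\a}$ is not maximal in $M$, a contradiction, and this case cannot arise.

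\emph{Intransitive case.} Suppose $K$ is intransitive. Then $K = (S_{\Delta_1} \times S_{\Delta_2}) \cap G$ for a partition $\O = \Delta_1 \cup \Delta_2$ with $\a \in \Delta_1$ and $k = |\Delta_1| \geqs 2$. Here the oddness of $n$ guarantees $k \ne n-k$, so this subgroup genuinely is maximal and not contained in $S_{n/2} \wr S_2$. Now
\[
K_{\a} = (S_{k-1} \times S_{n-k}) \cap G \leqs M.
\]
We do not even need to use that $K_{\a}$ is maximal in $M$ (which would require $k-1 \ne n-k$). It suffices, exactly as in the proof of Proposition \ref{p:b}, to check that $K$ acts primitively on $\a^K = \Delta_1$:
\begin{itemize}
\item If $G = S_n$, then $K$ induces $S_k$ on $\Delta_1$.
\item If $G = A_n$ and $k \geqs 3$, then $K$ induces at least $A_k$ on $\Delta_1$.
\item If $G = A_n$ and $k = 2$, then $n-k \geqs 3$, so $K$ contains a double transposition swapping the two points of $\Delta_1$.
\end{itemize}
In every case $K$ is primitive on $\Delta_1$, so $K_{\a}$ is maximal in $K$, a contradiction.

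The main obstacle is the imprimitive case. There one must make rigorous the claim that the point stabilizer of the wreath product is properly contained in an intransitive maximal subgroup of $M$, including when $G = A_n$ (where one intersects with $A_{n-1}$). I would handle this by exhibiting an explicit element of $(S_{a-1} \times S_{n-a}) \cap G$ that does not preserve the block system on $\O \setminus B$, for example a $3$-cycle moving points of $\O \setminus B$ across blocks. Such an element exists because $n-a \geqs 6$.
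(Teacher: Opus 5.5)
Your proposal is correct and follows the paper's proof closely. The intransitive case is settled by showing $K_\alpha$ is maximal in $K$ through the action of $K$ on $\Delta_1$ (the paper phrases this via the projection $K \to S_{\Delta}$), and the imprimitive case is excluded by the same intermediate subgroup $J=(G_\alpha)_{B\setminus\{\alpha\}}$, with your $3$-cycle in place of the paper's double transposition $(r,s)(t,u)$; the appeal to O'Nan--Scott is unnecessary, since the trichotomy and the shape of $K$ follow directly from the maximality of $K$ and the fact that $K$ stabilizes an orbit or a block system.
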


\begin{proof}
Let $K$ be a maximal subgroup of $G$ containing $H$. We need to prove that $H$ is maximal in $K$. We may as well assume that $K \ne M$, which implies that $H = M \cap K = K_{\a}$. Clearly, if $K$ acts primitively on $\O = \{1, \ldots, n\}$, then $K_{\a}$ is maximal in $K$. Therefore, we may assume that $K$ is either intransitive or imprimitive on $\O$.

First assume $K$ is intransitive and let $\Delta$ be the $K$-orbit containing $\alpha$. Set $\Gamma=\Omega\setminus\Delta$ and note that $|\Delta| \geqs 2$ and $K = G_{\Delta}$. Suppose $G = S_n$, so $K = S_\Delta \times S_\Gamma$ and $K_\alpha=(S_\Delta)_\alpha \times S_\Gamma$. Since $(S_\Delta)_\alpha$ is maximal in $S_\Delta$, we deduce that  $K_{\a}$ is maximal in $K$. Now assume $G = A_n$. If $|\Gamma|=1$ then $K_{\a} = A_{n-2}$ is a maximal subgroup of $K = A_{n-1}$. Now suppose $|\Gamma| \geqs 2$. Here the projection map $\pi : K \to S_{\Delta}$ is surjective and so the maximality of $(S_\Delta)_{\a}$ in $S_\Delta$ implies that the preimage $K_{\a} = \pi^{-1}((S_\Delta)_\alpha)$ is maximal in $K$.

Finally, suppose $K$ is transitive and imprimitive. Here $K$ is the stabilizer of a partition $\Pi$ of $\O$ into $b$ sets of size $a$, where $n = ab$ and $a,b \geqs 3$ are odd (since $n$ is odd). In particular, $K = (S_a \wr S_b) \cap G$. Let $\Gamma = \Gamma_1$ be the part in $\Pi$ containing $\a$ and let 
\[
J =(G_\alpha)_{\Gamma \setminus\{\alpha\}}
\]
be the setwise stabilizer of $\Gamma \setminus \{\alpha\}$ in $G_{\a}$. Since every element of $K_\alpha$ stabilizes $\Gamma$, it follows that 
$K_\alpha \leqs J<M$ and we note that $J$ is a proper subgroup of $M = G_{\a}$ since $\Gamma \setminus\{\alpha\}$ is a nonempty proper subset of $\Omega\setminus\{\alpha\}$. Since $a,b \geqs 3$, there exist distinct points $r,s,t,u$ such that $r,s \in \Gamma\setminus\{\a\}$ and $\a,u,v$ are contained in three distinct parts of $\Pi$. Then the double transposition 
$(r,s)(t,u)$ is in $J$, but it does not preserve $\Pi$, so it is not in $K_{\a}$ and we conclude that $H = K_{\alpha} < J <M$. But this is incompatible with the maximality of $H$ in $M$, so this case does not arise and the proof of the proposition is complete.
\end{proof}

\begin{proof}[Proof of Theorem \ref{thm-max2}]
Fix an odd integer $n \in \mathcal{A}$ and recall that if $(\mathcal{P}_n)$ is false, then this must be witnessed by a primitive subgroup of $S_n$. By definition of $\mathcal{A}$, the only primitive subgroups are $S_n$ and $A_n$, but neither group is a witness by Proposition \ref{p:ansn}. Therefore, we conclude that $(\mathcal{P}_n)$ is true.
\end{proof}

\begin{rem}\label{r:odd}
In view of Theorem \ref{thm-max2}, we know that $(\mathcal{P}_n)$ is true for every odd integer $n \in \mathcal{A}$. And by Theorem \ref{thm-max}, we also know that $(\mathcal{P}_n)$ holds for every odd prime $n$. However, it remains an open problem to completely determine the set of integers $n$ such that $(\mathcal{P}_n)$ holds. 

As explained above, to prove that $(\mathcal{P}_n)$ holds for a given integer $n$, it suffices to show that the conclusion holds for every primitive subgroup $G \leqs S_n$, where we take $M = G_{\a}$ to be the stabilizer of a point $\a \in \{1,\ldots, n\}$. Let us consider the first three composite odd integers $n \not\in \mathcal{A}$ that are not prime powers, so $n \in \{15,21,33\}$.

\begin{itemize}\addtolength{\itemsep}{0.2\baselineskip}
\item[{\rm (a)}] For $n = 15$ we can consider the primitive subgroup $G = S_6$, so $M = G_{\a} = S_2 \wr S_3$. With the aid of {\sc Magma}, we can show  that $M$ has a maximal subgroup $H = S_2 \times S_3$, which is contained in a maximal subgroup of $K = S_3 \wr S_2$. Since $K$ is maximal in $G$, we conclude that $(\mathcal{P}_{15})$ is false.

\item[{\rm (b)}] Next suppose $n = 21$. Here we take $G = S_7$ with $M = G_{\a} = S_5 \times S_2$, which allows us to identify $\O = \{1, \ldots, 21\}$ with the set of $2$-element subsets of $\{1, \ldots, 7\}$. If we choose a $2$-set $\b$ disjoint from $\a$, then $H = M_{\b} = (S_3 \times S_2) \times S_2$ is maximal in $M$ and we see that $H$ is also contained in a maximal subgroup $K = S_3 \times S_4$ of $G$. But we have $H < S_3 \times D_8 < K$, so $H$ is not maximal in $K$ and thus $(\mathcal{P}_{21})$ is false. 

\item[{\rm (c)}] Finally, suppose $n=33$. Here ${\rm PSL}_2(32)$ and ${\rm P\Gamma L}_2(32) = {\rm PSL}_2(32).5$ are the only primitive subgroups of $S_{33}$ that do not contain $A_{33}$. So we may write $G = {\rm PSL}_{2}(32).k$ with $k \in \{1,5\}$ and we note that $M = G_{\a} = 2^5{:}(31{:}k)$ is a Borel subgroup of $G$. Using {\sc Magma}, it is straightforward to check that every maximal subgroup of $M$ is a second maximal subgroup of $G$. It follows that $(\mathcal{P}_{33})$ holds, even though $33 \not\in \mathcal{A}$ is not a prime power.
\end{itemize}

We can also show that $(\mathcal{P}_{n})$ is false for $n \in \{35,45,55,65,85,91\}$ and true for $n \in \{57,63\}$. Let us also observe that it is easy to generalize the above construction in (b) to show that $(\mathcal{P}_n)$ is false for every integer $n = m(m-1)/2$ with $m \equiv 3 \imod{4}$ and $m \geqs 7$.
\end{rem}

\end{document}